\documentclass[reqno,12pt]{amsproc}

\usepackage{amsmath}
\newtheorem{theo}{Theorem}
\newtheorem{rem}{Remark}

\newtheorem{lem}{Lemma}

\newtheorem{df}{Definition}

\newcommand\eps\varepsilon
\newcommand\ph\varphi
\newcommand\kap\varkappa

\newcommand\bs {\boldsymbol}

\usepackage{hyperref}
\usepackage{url}

\usepackage{enumerate}
\usepackage{graphicx}
\usepackage{tikz-cd}
\usepackage{float}
\usepackage{yfonts}
\usepackage{mathrsfs}
\usepackage{amsthm}
\usepackage{amscd}

\begin{document}\title
{The D'Alembert-Lagrange Principle and Lagrange Equations}
\author[Oleg Zubelevich]{Oleg Zubelevich\\Faculty of Mechanics and Mathematics,\\
Lomonosov Moscow State University,\\ 1 Leninskie Gory, Moscow 119991, Russia}
\email{zubelevich@mech.math.msu.su}

\date{}
\subjclass[2020]{Primary 70-01; Secondary 70H03, 70H30}
\keywords{Classical mechanics, variational derivative, ideal constraints}

\begin{abstract}This expository article serves as a methodical guide, presenting a rigorous mathematical formulation of the D'Alembert-Lagrange principle and the derivation of the Lagrange equations for mechanical systems subject to ideal constraints. Designed for educational purposes, the paper establishes a clear geometric framework for the extended phase space, defining the space of virtual displacements and constraint reactions independently of their specific analytical representations. A central focus of this instructional text is the transition from the general equation of dynamics to the Lagrange equations of the second kind for holonomic systems. Specifically, we demonstrate that the Lagrange equations can be naturally and elegantly derived from considerations of the covariance of the variational derivative under the embedding mapping of the configuration manifold.
\end{abstract}

\maketitle

\section{Introduction}

Classical mechanics provides a profound framework for understanding the dynamics of physical systems. At the heart of analytical mechanics lie the D'Alembert-Lagrange principle and the Lagrange equations, which offer a powerful alternative to the direct application of Newton's laws, particularly when dealing with constrained systems.

Traditionally, in physics literature, these concepts are often introduced relying heavily on physical intuition, coordinate-dependent representations, and infinitesimal virtual displacements. However, a modern mathematical treatment demands a more rigorous, geometric perspective.

In this paper, we revisit the foundational principles of constrained mechanical systems to provide a mathematically precise exposition.

\section{Description of the problem at a physical level of rigor}

Let a system of particles with masses
$m_1,\ldots,m_\nu$ be given.
Let $\boldsymbol r_1,\ldots,\boldsymbol r_\nu$ denote the position vectors of these particles in some inertial Cartesian frame $OXYZ$:
$$\boldsymbol r_i=(X_i,Y_i,Z_i).$$
We represent the force acting on the $i$-th particle as the sum
$\boldsymbol F_i+\boldsymbol R_i,$ where
\begin{align}\boldsymbol F_i&=\boldsymbol F_i(t,\boldsymbol r_1,\ldots,\boldsymbol r_{\nu},\boldsymbol {\dot r}_1,\ldots,\boldsymbol {\dot r}_{\nu})=(F_i^x,F_i^y,F_i^z),\nonumber\\
 \boldsymbol R_i&=\boldsymbol R_i(t,\boldsymbol r_1,\ldots,\boldsymbol r_{\nu},\boldsymbol {\dot r}_1,\ldots,\boldsymbol {\dot r}_{\nu})=(R_i^x,R_i^y,R_i^z).\nonumber
 \end{align}
The forces $\boldsymbol F_i$ are assumed to be given and are called active forces. 

The forces $\boldsymbol R_i$ compel the system of particles to move in accordance with the equations
$$\Phi^k(t,\boldsymbol r_1,\ldots,\boldsymbol r_{\nu},\boldsymbol {\dot r}_1,\ldots,\boldsymbol {\dot r}_{\nu})=0,\quad k=1,\ldots, n,$$ which are called constraint equations or constraints. The forces $\boldsymbol R_i$ are called forces of constraint reactions or simply reactions. Reactions are to be determined during the solution of the problem. If there are no constraints, the reaction forces are set to zero.

Let us write Newton's second law for each particle:
\begin{equation}\label{dfgjjj}m_i\boldsymbol {\ddot r}_i=\boldsymbol F_i+\boldsymbol R_i,\quad i=1,\ldots, {\nu}.\end{equation}
To make this precise, we introduce the following notation:
\begin{align}G&=\mathrm{diag}\,(m_1,m_1,m_1,\ldots,m_{\nu},m_{\nu},m_{\nu});\quad x=(X_1,Y_1,Z_1,\ldots,X_{\nu},Y_{\nu},Z_{\nu})^T;\nonumber\\
f(t,x,\dot x)&=(F_1^x,F_1^y,F_1^z,\ldots ,F_{\nu}^x,F_{\nu}^y,F_{\nu}^z),\quad N(t,x,\dot x)=(R_1^x,R_1^y,R_1^z,\ldots ,R_{\nu}^x,R_{\nu}^y,R_{\nu}^z),\nonumber\\
\ph^k(t,x,\dot x)&=\Phi^k.\nonumber
\end{align}

\section{The D'Alembert-Lagrange Principle}
The row vector $f$ and column vector $\ph=(\ph^1,\ldots,\ph^n)^T$ are assumed to be smooth functions\footnote{Hereafter, we assume all functions are as smooth as necessary for the formulas to make sense.} on the direct product $I\times D\times \mathbb{R}^m.$ Here $D\subset\mathbb{R}^m$ is a domain, $$ m=3\nu,\quad x=(x^1,\ldots, x^m)^T\in D,\quad t\in I=(t_1,t_2),\quad \dot x\in\mathbb{R}^m.$$
System (\ref{dfgjjj}) is written as follows:
\begin{equation}\label{seg44}
G\ddot x=f^T(t,x,\dot x)+N^T(t,x,\dot x),\end{equation}
and the constraints take the form:
\begin{equation}\label{xfg55}
\ph(t,x,\dot x)=0.\end{equation}
The domain $D$ is called the configuration space; the domain $D\times\mathbb{R}^m\ni (x,\dot x)$ is called the phase space; the domain $I\times D\times\mathbb{R}^m\ni (t,x,\dot x)$ is called the extended phase space of system (\ref{seg44}).

We represent system (\ref{seg44}) in covariant form:
\begin{equation}\label{azzxxx}[\mathscr T]=f+N,\quad[\mathscr T]= \frac{d}{dt}\frac{\partial\mathscr{T}}{\partial \dot x}-\frac{\partial\mathscr{T}}{\partial  x}, \end{equation}
where
$$\mathscr T=\frac{1}{2}\sum_{i=1}^\nu m_i|\boldsymbol {\dot r}_i|^2=\frac{1}{2}\dot x^TG\dot x$$
is the kinetic energy of the system.
\begin{rem}The representation of system (\ref{seg44}) in the form (\ref{azzxxx})
plays a key role in the following exposition (see section \ref{cbbbbb}). Therefore, we will adhere to this form, although at this stage it may seem strange.\end{rem}

If there exists a function $\mathscr V(t,x,\dot x)$ such that
$f=[\mathscr{V}],$
then the active forces are called generalized potential forces.
The function $\mathscr V(t,x,\dot x)$ is called a generalized potential.
If the generalized potential does not depend on $\dot x$, it is called a potential or potential energy.

Assume that the constraints are non-degenerate:
$$ \mathrm{rang}\,\ph_{\dot x}=n<m,\quad\ph_{\dot x}=\begin{pmatrix}
\frac{\partial \ph^1}{\partial \dot x^1}  & \cdots & \frac{\partial \ph^1}{\partial \dot x^m} \\
\vdots    & \ddots & \vdots  \\
\frac{\partial \ph^n}{\partial \dot x^1} & \cdots & \frac{\partial \ph^n}{\partial \dot x^m}
\end{pmatrix}.$$
The set of solutions to equation (\ref{xfg55})
$$S=\{(t,x,\dot x)\mid\ph(t,x,\dot x)=0\},\quad \dim S=2m+1-n,$$ which we assume to be non-empty, is a smooth submanifold of the extended phase space.

\begin{theo}[Axiom of constraints]\label{as11}
There exists a unique row vector $N=N(t,x,\dot x)$ smooth on $I\times D\times \mathbb{R}^m$ such that

1) the functions $\ph$ are first integrals of system (\ref{azzxxx});

2) for any column vector $\xi\in\mathbb{R}^m$ such that
\begin{equation}\label{sdg-0}\ph_{\dot x}(t,x,\dot x)\xi=0\end{equation}
the equality
\begin{equation}\label{sfg43}N(t,x,\dot x)\xi =0\end{equation}
holds.
\end{theo}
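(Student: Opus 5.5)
Condition 2) says that $N$ annihilates the kernel of $\ph_{\dot x}$. Since $\ph_{\dot x}$ has full rank $n$, the annihilator of $\ker\ph_{\dot x}$ is exactly the row space of $\ph_{\dot x}$. So condition 2) holds if and only if
$$N=\lambda^T\ph_{\dot x}$$
for some column $\lambda=\lambda(t,x,\dot x)\in\mathbb{R}^n$, and this $\lambda$ is unique. The plan is therefore to reduce the theorem to finding $\lambda$, and then to show that condition 1) determines $\lambda$ uniquely and smoothly.

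First we rewrite condition 1) in coordinates. Since $[\mathscr T]=\ddot x^TG$, system (\ref{azzxxx}) reads
$$\ddot x=G^{-1}(f+N)^T.$$
The derivative of $\ph$ along solutions is
$$\ph_t+\ph_x\dot x+\ph_{\dot x}\ddot x=\ph_t+\ph_x\dot x+\ph_{\dot x}G^{-1}(f^T+N^T).$$
We interpret ``$\ph$ are first integrals'' as requiring this expression to vanish identically on $I\times D\times\mathbb{R}^m$, so that each level set of $\ph$, in particular $S$, is invariant. Substituting $N^T=\ph_{\dot x}^T\lambda$ gives the linear system
$$A\lambda=-\bigl(\ph_t+\ph_x\dot x+\ph_{\dot x}G^{-1}f^T\bigr),\qquad A=\ph_{\dot x}G^{-1}\ph_{\dot x}^T.$$

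The key step is to show that the $n\times n$ matrix $A$ is invertible at every point. The matrix $G^{-1}$ is symmetric positive definite because all masses are positive. Hence
$$\eta^TA\eta=(\ph_{\dot x}^T\eta)^TG^{-1}(\ph_{\dot x}^T\eta)\ge 0,$$
with equality only if $\ph_{\dot x}^T\eta=0$. Since $\ph_{\dot x}$ has rank $n$, this forces $\eta=0$. So $A$ is positive definite, hence invertible. Therefore
$$\lambda=-A^{-1}\bigl(\ph_t+\ph_x\dot x+\ph_{\dot x}G^{-1}f^T\bigr)$$
is uniquely determined and smooth, because $A^{-1}$ depends smoothly on the entries of $A$. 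This gives existence and smoothness of $N=\lambda^T\ph_{\dot x}$.

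For uniqueness, suppose $N$ is any row vector satisfying 1) and 2). By the annihilator argument, $N=\lambda^T\ph_{\dot x}$ for a unique $\lambda$. By the computation above, this $\lambda$ must solve $A\lambda=\dots$, so it coincides with the $\lambda$ found above.

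The main obstacle is interpreting condition 1). If it is read only as invariance of $S$, the identity need only hold on $S$, and uniqueness off $S$ fails. I would adopt the reading that the derivative of each $\ph^k$ along the flow vanishes identically; this is what makes $N$ unique on the whole of $I\times D\times\mathbb{R}^m$, as the statement requires. A smaller point is smoothness of $\lambda$ as a function of the point, which the linear-algebra identification gives directly through the explicit formula above.
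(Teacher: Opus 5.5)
Your proposal is correct and follows the paper's proof essentially step for step. The paper also uses Lemma~\ref{dfh6ll} (your annihilator argument) to write $N=\Lambda\varphi_{\dot x}$, differentiates $\varphi$ along solutions of (\ref{azzxxx}) and sets the result to zero identically (the same reading of ``first integral'' you adopt), and solves for $\Lambda$ using the positive definiteness of $\varphi_{\dot x}G^{-1}\varphi_{\dot x}^T$; that positive definiteness is Lemma~\ref{xfg6zzzz6}, which you verify directly, and your write-up is slightly more explicit than the paper's about why $\lambda$ is unique and why the resulting $N$ satisfies both conditions.
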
\begin{df}The linear space of vectors $\xi\in\mathbb{R}^m$ satisfying (\ref{sdg-0}), i.e., $\ker\ph_{\dot x}(t,x,\dot x)$, is called the space of virtual displacements. This space is associated with the values of the variables $(t,x,\dot x)$.

The dimension of the space of virtual displacements is called the number of degrees of freedom of the system
(\ref{azzxxx}), (\ref{xfg55}).
\end{df} Obviously, the number of degrees of freedom is equal to $m-n=:r$.

\begin{df} If the forces $N$ are chosen in accordance with Theorem \ref{as11}, then the system (\ref{azzxxx}), (\ref{xfg55}) is called a system with ideal constraints. The constraints (\ref{xfg55}) are called ideal, and the forces $N$ are called reactions of ideal constraints.\end{df}

Below, only systems with ideal constraints are discussed.

By Theorem \ref{as11}, the manifold $S$ is an invariant manifold of system (\ref{azzxxx}).

\subsubsection*{ Proof of Theorem \ref{as11}.} \begin{lem}\label{dfh6ll} Let $X,Y,Z$ be vector spaces and
$$A:X\to Y,\quad B:X\to Z$$ be linear operators.

Then if $\ker A\subset\ker B,$ there exists an operator $\Lambda:Y\to Z$ such that $B=\Lambda A$:
\[
  \begin{tikzcd}
    X \arrow{r}{A} \arrow[swap]{dr}{B} & Y \arrow{d}{\Lambda} \\
     & Z
  \end{tikzcd}
\]
\end{lem}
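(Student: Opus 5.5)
The plan is to define $\Lambda$ first on the image $A(X)\subset Y$, check that this is well defined using $\ker A\subset\ker B$, and then extend it linearly to all of $Y$. In the paper's setting the spaces are finite-dimensional; the same argument works in general once we grant a Hamel basis.

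\emph{Step 1 (definition on the image).} For $y\in A(X)$, choose any $x\in X$ with $Ax=y$ and set $\Lambda_0 y:=Bx$. This does not depend on the choice of $x$. If $Ax=Ax'$, then $x-x'\in\ker A\subset\ker B$, so $Bx=Bx'$.

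\emph{Step 2 (linearity on the image).} Suppose $y_1=Ax_1$ and $y_2=Ax_2$. Then $\alpha y_1+\beta y_2=A(\alpha x_1+\beta x_2)$. By the well-definedness from Step 1,
$$\Lambda_0(\alpha y_1+\beta y_2)=B(\alpha x_1+\beta x_2)=\alpha\Lambda_0y_1+\beta\Lambda_0 y_2.$$
So $\Lambda_0:A(X)\to Z$ is linear, and by construction $\Lambda_0 A=B$.

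\emph{Step 3 (extension to $Y$).} Choose a complementary subspace $W$ with $Y=A(X)\oplus W$. In finite dimensions, complete a basis of $A(X)$ to a basis of $Y$; in general, use Zorn's lemma. Define $\Lambda$ by $\Lambda(u+w)=\Lambda_0u$ for $u\in A(X)$ and $w\in W$. Then $\Lambda$ is linear, it agrees with $\Lambda_0$ on $A(X)$, and therefore $\Lambda A=B$.

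The only substantive point is Step 1, where the inclusion of kernels is used. The extension in Step 3 is routine linear algebra and is where $\Lambda$ can fail to be unique: it is determined only on $A(X)$.

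For the intended use, $A=\ph_{\dot x}$ and $B=N$, with $Z=\mathbb{R}$. The lemma then gives $N=\Lambda\ph_{\dot x}$ for a row vector $\Lambda\in\mathbb{R}^n$. Because $\ph_{\dot x}$ has full rank $n$, it is surjective, so in that case $\Lambda$ is unique.
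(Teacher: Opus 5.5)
Your proof is correct and complete. Step 1 is the only place where $\ker A\subset\ker B$ is needed, and the extension by a complement in Step 3 is legitimate. The paper gives no proof of this lemma; it quotes it as a standard linear-algebra fact, so your argument supplies what is omitted there rather than competing with a different route.

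One remark connects your argument to how the lemma is used. In every application in the paper, $A$ is surjective: $A=\ph_{\dot x}$ in Theorems \ref{as11} and \ref{dfg44}, and $A=a_y$ in the Appendix. So your Step 3 is vacuous there and, as you note, $\Lambda$ is unique. In that case you can even write
\[
\Lambda=BA^T(AA^T)^{-1}.
\]
This holds because $I-A^T(AA^T)^{-1}A$ is the orthogonal projection onto $\ker A$, which $B$ annihilates. This explicit formula also makes the smooth dependence of $\Lambda$ on $(t,x,\dot x)$ evident, although in Theorem \ref{as11} the paper gets smoothness anyway from formula (\ref{xdfgaxv}).
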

From this and formulas (\ref{sdg-0}), (\ref{sfg43}), it follows 
that there is a row vector $\Lambda=(\lambda_1,\ldots,\lambda_n)(t,x,\dot x)$ such that 
\begin{equation}\label{dfg54hh}N=\Lambda\ph_{\dot x}.\end{equation}
Let us differentiate $\ph$ along the solutions of system (\ref{azzxxx}) and equate the result to zero:
$$\ph_t+\ph_x\dot x+\ph_{\dot x}G^{-1}(f^T+N^T)=0.$$
Substituting $N$ from formula (\ref{dfg54hh}) here, we find
\begin{equation}\label{xdfgaxv}\Lambda^T=-\big(\ph_{\dot x}G^{-1}\ph_{\dot x}^T\big)^{-1}(\ph_t+\ph_x\dot x+\ph_{\dot x}G^{-1}f^T).\end{equation}
Here we use the following fact from linear algebra.
\begin{lem}\label{xfg6zzzz6}If a square $m\times m$ matrix $P$ is symmetric and positive definite, and matrix $B$ consists of $m$ columns and $n$ rows ($m>n$), with $\mathrm{rang}\,B=n,$ then the matrix $BPB^T$ is symmetric and positive definite.\end{lem}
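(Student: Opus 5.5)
Symmetry is immediate from the transpose: $(BPB^T)^T=(B^T)^TP^TB^T=BPB^T$, because $P^T=P$. So the substance of Lemma \ref{xfg6zzzz6} is positive definiteness, which we reduce to the positive definiteness of $P$ combined with the full row rank of $B$.

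Take any column vector $\eta\in\mathbb{R}^n$ and write
\[
\eta^T BPB^T\eta=(B^T\eta)^T P\,(B^T\eta).
\]
Set $\zeta=B^T\eta\in\mathbb{R}^m$. Since $P$ is positive definite, the quadratic form $\zeta^TP\zeta$ is nonnegative, and it vanishes only when $\zeta=0$. It therefore remains to show that $B^T\eta=0$ forces $\eta=0$, i.e.\ that $\ker B^T=\{0\}$.

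The kernel condition is the one point where the hypothesis $\mathrm{rang}\,B=n$ enters. Column rank equals row rank, so $\mathrm{rang}\,B^T=\mathrm{rang}\,B=n$. Now $B^T$ is an $m\times n$ matrix whose $n$ columns are the rows of $B$, and these columns are linearly independent. Hence $B^T\eta=0$ implies $\eta=0$, and for $\eta\ne0$ we get $\eta^TBPB^T\eta>0$.

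There is no real obstacle here; the only point needing care is the rank argument in the previous paragraph. In the application to formula (\ref{xdfgaxv}) we take $B=\ph_{\dot x}$, whose rank is $n$ by the non-degeneracy assumption on the constraints, and $P=G^{-1}$, which is diagonal with positive entries $1/m_i$. The lemma then shows that $\ph_{\dot x}G^{-1}\ph_{\dot x}^T$ is positive definite, hence invertible, which is what makes formula (\ref{xdfgaxv}) meaningful.
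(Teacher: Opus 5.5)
Your proof is correct and complete. Symmetry follows from $P^T=P$. Positive definiteness follows from $\eta^TBPB^T\eta=(B^T\eta)^TP(B^T\eta)$ together with $\ker B^T=\{0\}$, which is exactly where $\mathrm{rang}\,B=n$ is needed.

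The paper gives no proof of this lemma; it simply cites it as a standard fact from linear algebra. It uses the lemma for the invertibility of $\ph_{\dot x}G^{-1}\ph_{\dot x}^T$ in (\ref{xdfgaxv}), and again with $B=u_y^T$, $P=G$ for the positive definiteness of $T_2$ in Theorem \ref{asdf4eew}. Your argument is the standard one that supplies the missing proof.
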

The theorem is proved.

Equation (\ref{azzxxx}) with the substituted reaction (\ref {dfg54hh}) is called the Lagrange equation with multipliers or the Lagrange equation of the first kind.

\begin{theo}\label{dfg44}
Let the function $x(t)$ satisfy (\ref {xfg55}) and 
\begin{align}\ph_{\dot x}&\big(t,x(t),\dot x(t)\big)\xi=0\Longrightarrow \nonumber\\
&([\mathscr T]-f)\xi=0.\label{xsdfg23ee}\end{align}
Then $x(t)$ satisfies (\ref{azzxxx}) with reactions (\ref{dfg54hh}), (\ref{xdfgaxv}).
\end{theo}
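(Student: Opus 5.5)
The plan is to turn the implication (\ref{xsdfg23ee}) into an explicit formula for $[\mathscr T]-f$ along $x(t)$ via Lemma \ref{dfh6ll}, and then identify the multipliers using the fact that $\ph\equiv0$ along $x(t)$. Throughout we fix $t$ and evaluate all matrices at $\big(t,x(t),\dot x(t)\big)$.

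\textbf{Step 1: representation of $[\mathscr T]-f$.} Apply Lemma \ref{dfh6ll} with $X=\mathbb{R}^m$, $Y=\mathbb{R}^n$, $Z=\mathbb{R}$, $A=\ph_{\dot x}$ and $B=[\mathscr T]-f$, the latter evaluated along $x(t)$ and viewed as a row vector, i.e.\ a linear functional. Hypothesis (\ref{xsdfg23ee}) says exactly that $\ker A\subset\ker B$. The lemma therefore gives a row vector $\tilde\Lambda(t)=(\tilde\lambda_1,\ldots,\tilde\lambda_n)$ with
$$[\mathscr T]-f=\tilde\Lambda\ph_{\dot x}.$$
Since $\mathrm{rang}\,\ph_{\dot x}=n$, the rows of $\ph_{\dot x}$ are linearly independent, so $\tilde\Lambda$ is unique. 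Setting $\tilde N=\tilde\Lambda\ph_{\dot x}$, we see that $x(t)$ satisfies (\ref{azzxxx}) with force $\tilde N$, and $\tilde N$ has the form (\ref{dfg54hh}). It remains to show that $\tilde\Lambda=\Lambda$, where $\Lambda$ is given by (\ref{xdfgaxv}).

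\textbf{Step 2: identification of the multipliers.} Since $[\mathscr T]=G\ddot x$ with $G$ constant, Step 1 gives
$$\ddot x=G^{-1}\big(f^T+\ph_{\dot x}^T\tilde\Lambda^T\big).$$
Because $x(t)$ satisfies (\ref{xfg55}) identically in $t$, the derivative $\frac{d}{dt}\ph\big(t,x(t),\dot x(t)\big)=\ph_t+\ph_x\dot x+\ph_{\dot x}\ddot x$ vanishes. Substituting $\ddot x$ yields
$$\ph_t+\ph_x\dot x+\ph_{\dot x}G^{-1}f^T+\big(\ph_{\dot x}G^{-1}\ph_{\dot x}^T\big)\tilde\Lambda^T=0.$$
By Lemma \ref{xfg6zzzz6}, with $P=G^{-1}$ and $B=\ph_{\dot x}$, the matrix $\ph_{\dot x}G^{-1}\ph_{\dot x}^T$ is invertible. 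Solving for $\tilde\Lambda^T$ gives precisely formula (\ref{xdfgaxv}) evaluated along $x(t)$. Hence $\tilde N$ coincides with the reaction $N$ of Theorem \ref{as11} on the solution.

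The only delicate point is Step 1. Lemma \ref{dfh6ll} must be applied pointwise in $t$, and the resulting $\tilde\Lambda$ should be a legitimate (smooth) function of $t$. Uniqueness of $\tilde\Lambda$ is used here: the explicit formula obtained in Step 2 then shows that $\tilde\Lambda$ is smooth, so no separate regularity argument is needed.
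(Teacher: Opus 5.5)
Your proof is correct and follows the paper's argument: Lemma \ref{dfh6ll} gives $\tilde\Lambda(t)$ with $[\mathscr T]-f=\tilde\Lambda\ph_{\dot x}$, and differentiating $\ph\big(t,x(t),\dot x(t)\big)=0$ and substituting $\ddot x$ identifies $\tilde\Lambda$ with $\Lambda$ from (\ref{xdfgaxv}). Your remarks on the uniqueness and smoothness of $\tilde\Lambda$, and the explicit appeal to Lemma \ref{xfg6zzzz6} for invertibility, spell out what the paper leaves implicit. Writing $[\mathscr T]=G\ddot x$ instead of the row $\ddot x^TG$ is harmless because $G$ is symmetric.
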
Equation (\ref{xsdfg23ee}) with virtual displacements $\xi$ is called the general equation of dynamics.

If one sequentially substitutes the basis vectors of the space of virtual displacements into (\ref{xsdfg23ee}), a system of $r$ scalar differential equations is obtained, each containing components of the vector $\ddot x$. Together with the constraint equations, this yields a system of $m$ differential equations of order $2r+n=2m-n=\dim S-1$.

{\it Proof of Theorem \ref{dfg44}.} Indeed, by Lemma \ref{dfh6ll} there exists a row vector
$\tilde\Lambda(t)$ such that 
\begin{equation}\label{dfh6pp}
[\mathscr T]-f=\ddot x^T(t)G-f\big(t,x(t),\dot x(t)\big)=\tilde\Lambda(t)\ph_{\dot x}\big(t,x(t),\dot x(t)\big).\end{equation}
Let us differentiate the equation $\ph\big(t,x(t),\dot x(t)\big)=0:$
$$\ph_t\big(t,x(t),\dot x(t)\big)+\ph_x\big(t,x(t),\dot x(t)\big)\dot x(t)+\ph_{\dot x}\big(t,x(t),\dot x(t)\big)\ddot x(t)=0.$$
Substituting $\ddot x$ from (\ref{dfh6pp}) here, we verify that 
$\tilde \Lambda(t)=\Lambda\big(t,x(t),\dot x(t)\big).$

The theorem is proved.

The following theorem is obvious.
\begin{theo}\label{dfg4xde4}
Let $x(t)$ satisfy (\ref{azzxxx}) with reactions (\ref{dfg54hh}), (\ref{xdfgaxv}).
Then the following implication holds: $$\ph_{\dot x}\big(t,x(t),\dot x(t)\big)\xi=0\Longrightarrow
([\mathscr T]-f)\xi=0.$$\end{theo}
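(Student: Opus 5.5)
The plan is to substitute the equation of motion directly into the expression $[\mathscr T]-f$ and use the structure of the reaction given by (\ref{dfg54hh}). Since $x(t)$ satisfies (\ref{azzxxx}) with reactions (\ref{dfg54hh}), (\ref{xdfgaxv}), along this solution we have the identity
$$[\mathscr T]-f=N\big(t,x(t),\dot x(t)\big)=\Lambda\big(t,x(t),\dot x(t)\big)\,\ph_{\dot x}\big(t,x(t),\dot x(t)\big),$$
where $\Lambda$ is the row vector defined by (\ref{xdfgaxv}). Here $[\mathscr T]$ is evaluated along $x(t)$, that is, $[\mathscr T]=\ddot x^T(t)G$, as in (\ref{dfh6pp}).

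Next, take an arbitrary virtual displacement $\xi$ with $\ph_{\dot x}\big(t,x(t),\dot x(t)\big)\xi=0$ and multiply the identity above on the right by $\xi$. By associativity of matrix multiplication,
$$([\mathscr T]-f)\xi=\Lambda\big(\ph_{\dot x}\xi\big)=\Lambda\cdot 0=0,$$
which is exactly the required implication.

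There is no genuine obstacle. The only point requiring care is that $\Lambda$ is well defined, i.e. that the matrix $\ph_{\dot x}G^{-1}\ph_{\dot x}^T$ in (\ref{xdfgaxv}) is invertible. This follows from Lemma \ref{xfg6zzzz6}, applied with $P=G^{-1}$ (symmetric and positive definite since all masses are positive) and $B=\ph_{\dot x}$ (of rank $n$ by the non-degeneracy assumption). In fact the argument uses only the form $N=\Lambda\ph_{\dot x}$ of the reaction, equivalently property 2) of Theorem \ref{as11}; the particular formula (\ref{xdfgaxv}) for $\Lambda$ plays no role in this direction.
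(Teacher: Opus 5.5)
Your argument is correct and is the one the paper intends. The paper states Theorem~\ref{dfg4xde4} as obvious and gives no proof; the intended reasoning is exactly your substitution: along the solution $[\mathscr T]-f=N=\Lambda\ph_{\dot x}$, so $([\mathscr T]-f)\xi=\Lambda(\ph_{\dot x}\xi)=0$ for every virtual displacement $\xi$.
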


Theorems \ref{as11}, \ref{dfg44}, \ref{dfg4xde4} constitute the D'Alembert-Lagrange principle.

\section{On the Correctness of the Definition of Reactions and Virtual Displacements}\label{sdf55}
The definitions of the reactions $N$ and the space of virtual displacements involve the functions $\ph$.
In this section, we show that neither the reactions nor the space of virtual displacements depend on the choice of the functions $\ph$ that define the manifold $S$. In other words, the reactions and virtual displacements depend on the geometry of the manifold $S$, rather than on the specific way it is analytically specified.

Suppose that the manifold $S$ is also defined by a function
$$\psi(t,x,\dot x)=(\psi^1,\ldots,\psi^n)^T,\quad \mathrm{rank}\,\psi_{\dot x}=n<m,\quad
\forall (t,x,\dot x)\in I\times D\times \mathbb{R}^m,$$
i.e.,
$$S=\{\ph=0\}=\{\psi=0\}.$$

In this case, every point of the manifold $S$  has a neighborhood in the extended phase space in which a smooth function
$$U=(U^1,\ldots,U^n)^T(t,x,\dot x,z)$$
is defined, where the parameter $z$ varies in a neighborhood of zero in $\mathbb{R}^n$.

The function $U$ has the following properties (see Theorem \ref{sdfgb09} below):
$$U(t,x,\dot x,z)=0\Leftrightarrow z=0,\quad \det U_z(t,x,\dot x,0)\ne 0,\quad
\psi(t,x,\dot x)=U(t,x,\dot x,\ph(t,x,\dot x)).$$
The following formulas hold:
$$\psi_t=U_t(t,x,\dot x,\ph)+U_z(t,x,\dot x,\ph)\ph_t,\quad U_t(t,x,\dot x,\ph)\mid_{\ph=0}=0.$$
Similarly,
$\psi_x\mid_S=U_z\ph_x\mid_S,\quad \psi_{\dot x}\mid_S=U_z\ph_{\dot x}\mid_S.$

In particular, $$\mathrm{rank}\,\big(\psi_{\dot x}\mid_S\big)=n, \quad \ker \big(\psi_{\dot x}\mid_S\big)=\ker\big( \ph_{\dot x}\mid_S\big).$$

We compute the reactions corresponding to the different ways of specifying the manifold $S$:
\begin{align}
N^T&=-\ph^T_{\dot x}\big(\ph_{\dot x}G^{-1}\ph_{\dot x}^T\big)^{-1}(\ph_t+\ph_x\dot x+\ph_{\dot x}G^{-1}f^T)\nonumber\\
\tilde N^T&=-\psi^T_{\dot x}\big(\psi_{\dot x}G^{-1}\psi_{\dot x}^T\big)^{-1}(\psi_t+\psi_x\dot x+\psi_{\dot x}G^{-1}f^T).\nonumber
\end{align}
Hence,
$
N^T\mid_S=\tilde N^T\mid_S.$
\section{Holonomic and non-holonomic constraints}
\begin{df}Constraints (\ref{xfg55}) are called holonomic if there are functions
$$g=(g^1,\ldots, g^n)^T(t,x),\quad \mathrm{rang}\, g_x=n$$ such that 
$$S=\Big\{(t,x,\dot x)\mid\frac{d}{dt}g(t,x) =0\Big\},\quad \frac{d}{dt}g(t,x)=g_t(t,x)+g_x(t,x)\dot x.$$

Constraints that are not holonomic are called non-holonomic.
\end{df}
The question of the holonomicity of constraints is equivalent to the question of the integrability of a differential system on a manifold \cite{stern}.

The equations $g(t,x)=0$ are called geometric constraints, and the equations (\ref{xfg55}) are called differential constraints.

Hereafter, until the end of Section \ref{cbbbbb}, we will consider the constraints holonomic and write
\begin{equation}\label{xdfg11gy}\ph(t,x,\dot x)=g_t(t,x)+g_x(t,x)\dot x=0.\end{equation}
In particular,
\begin{equation}\label{xf11q}\ph_{\dot x}=g_x,\quad\ker \ph_{\dot x}(t,x,\dot x)=\ker g_x(t,x).\end{equation}

\section{Covariance of the variational derivative}
Assume that the manifold
$$\Sigma_t=\{x\in D\mid g(t,x)=0\},\quad \dim\Sigma_t=r$$ is the image of a smooth manifold $Y,\quad \dim Y=r$ under the embedding\footnote{By an embedding we mean a smooth injective mapping
$u(t,\cdot):Y\to\mathbb{R}^m$ such that for every point $y\in Y$ and its image $x=u(t,y)\in \mathbb{R}^m$ there exist neighborhoods $U_y\subset Y$ and $U_x\subset\mathbb{R}^m$ with local coordinates $(y^1,\ldots,y^r)$ and $(x^1,\ldots,x^m)$ respectively, in which $u$ takes the form: $x^i=y^i,\quad i=1,\ldots, r,\quad x^s=0,\quad s=r+1,\ldots,m,$ moreover $U_x\cap \Sigma_t=\{x^s=0,\quad s=r+1,\ldots,m\}.$
The specified local coordinates smoothly depend on $t\in I$.}
$$u(t,\cdot):Y\to D,\quad \Sigma_t=u(t,Y).$$
Let $y=(y^1,\ldots,y^r)^T$ denote the local coordinates in $Y$;
\begin{equation}\label{fg11}g(t,u(t,y))=0,\quad u=(u^1,\ldots,u^m)^T, \quad x=u(t,y).\end{equation}
The linear mapping
$$u_y(t,y):T_yY\to T_{u(t,y)}\Sigma_t$$ is an isomorphism, in particular,
$\mathrm{rang}\,u_y=r$,
and the tangent space
$T_{u(t,y)}\Sigma_t$ coincides with the space of virtual displacements:
$$T_{u(t,y)}\Sigma_t=\ker \big(\ph_{\dot x}\mid_{x=u(t,y)}\big).$$
This is clear if we differentiate formula (\ref{fg11}) with respect to $y$ and use (\ref{xf11q}):
$$g_x(t,u(t,y))u_y(t,y)=0.$$
Consider a smooth function $\mathscr{F}(t,x,\dot x)$ defined on the extended phase space.
\begin{df}
The variational derivative of $\mathscr{F}$ is defined as the following differential operator
$$[\mathscr{F}]=([\mathscr{F}]_{1},\ldots ,[\mathscr{F}]_{m}),\quad [\mathscr{F}]_{k}=\frac{d}{dt}\frac{\partial \mathscr{F}}{\partial\dot x^k}-\frac{\partial \mathscr{F}}{\partial x^k}.$$
\end{df}
Obviously, the variational derivative is a linear operation and
for any function $w=w(t,x),\quad w:I\times D\to \mathbb{R}$
the identity 
$$\Big[\frac{dw}{dt}\Big]=0$$
holds.

\begin{theo}\label{dfggg456}
The equality
$$[\mathscr{F}]\Big|_{x=u(t,y)}u_y(t,y)=[F]$$
holds, where the function $F:I\times TY\to\mathbb{R}$ ($TY$ is the tangent bundle) is defined by the formula
\begin{equation}\label{dff}F(t,y,\dot y)=\mathscr{F}\Big|_{x=u(t,y)}=\mathscr{F}\big(t,u(t,y),u_t(t,y)+u_y(t,y)\dot y\big).\end{equation}
\end{theo}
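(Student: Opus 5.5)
The plan is to verify the identity componentwise by a direct chain-rule computation. Fix an index $j\in\{1,\ldots,r\}$ and compute $[F]_j=\frac{d}{dt}\frac{\partial F}{\partial\dot y^j}-\frac{\partial F}{\partial y^j}$ along an arbitrary smooth curve $y(t)$, with $x(t)=u(t,y(t))$. Along such a curve
$$\dot x(t)=u_t(t,y(t))+u_y(t,y(t))\dot y(t),$$
which is precisely the third argument of $\mathscr F$ in (\ref{dff}).

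Both $[F]$ and the left-hand side are differential expressions evaluated on the same curve, so checking the identity on arbitrary curves proves it. Throughout, I use the summation convention over $k=1,\ldots,m$ and evaluate all derivatives of $\mathscr F$ at $(t,u,u_t+u_y\dot y)$.

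\textbf{Step 1: differentiate $F$ in $\dot y^j$.} Since $\dot y$ enters only linearly through the velocity argument,
$$\frac{\partial F}{\partial\dot y^j}=\frac{\partial\mathscr F}{\partial\dot x^k}\,\frac{\partial u^k}{\partial y^j}.$$
Applying $d/dt$ along the curve and the product rule gives
$$\Big(\frac{d}{dt}\frac{\partial\mathscr F}{\partial\dot x^k}\Big)\frac{\partial u^k}{\partial y^j}+\frac{\partial\mathscr F}{\partial\dot x^k}\,\frac{d}{dt}\frac{\partial u^k}{\partial y^j}.$$

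\textbf{Step 2: differentiate $F$ in $y^j$.} Here $y$ enters both through $x=u(t,y)$ and through the velocity $u_t+u_y\dot y$. Hence
$$\frac{\partial F}{\partial y^j}=\frac{\partial\mathscr F}{\partial x^k}\frac{\partial u^k}{\partial y^j}+\frac{\partial\mathscr F}{\partial\dot x^k}\Big(\frac{\partial^2u^k}{\partial t\,\partial y^j}+\frac{\partial^2u^k}{\partial y^i\partial y^j}\dot y^i\Big).$$

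\textbf{Step 3: cancel the extra terms.} The key observation, and the only real point of the argument, is the equality of mixed partial derivatives (Schwarz, using smoothness of $u$):
$$\frac{d}{dt}\frac{\partial u^k}{\partial y^j}=\frac{\partial^2u^k}{\partial t\,\partial y^j}+\frac{\partial^2u^k}{\partial y^i\,\partial y^j}\dot y^i=\frac{\partial}{\partial y^j}\big(u^k_t+u^k_{y^i}\dot y^i\big).$$
Because of this, the second term of Step 1 cancels exactly with the second term of Step 2 upon subtraction. What remains is
$$[F]_j=\Big(\frac{d}{dt}\frac{\partial\mathscr F}{\partial\dot x^k}-\frac{\partial\mathscr F}{\partial x^k}\Big)\frac{\partial u^k}{\partial y^j}=[\mathscr F]_k\Big|_{x=u(t,y)}\frac{\partial u^k}{\partial y^j},$$
which is the $j$-th component of $[\mathscr F]\big|_{x=u(t,y)}u_y$.

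I expect no genuine obstacle. The only places needing care are bookkeeping points:
\begin{enumerate}[(i)]
\item the total derivative $\frac{d}{dt}\frac{\partial\mathscr F}{\partial\dot x^k}$ must be taken along the image curve $x(t)=u(t,y(t))$, so it coincides with the one appearing in $[\mathscr F]$ restricted to $x=u(t,y)$;
\item the $t$-dependence of the embedding must be handled correctly, which the mixed-partials identity in Step 3 takes care of.
\end{enumerate}
Since the computation is local in coordinates $y$, it also shows the statement is independent of the chart chosen on $Y$.
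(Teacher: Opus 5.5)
Your proposal is correct and follows essentially the same route as the paper: a direct chain-rule computation of $\partial F/\partial y^j$ and $\frac{d}{dt}\,\partial F/\partial \dot y^j$ in coordinates. In both, the terms involving second derivatives of $u$ cancel on subtraction; you only make explicit the equality of mixed partials that the paper uses tacitly.
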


{\it Proof of Theorem \ref{dfggg456}.} Let us carry out the calculations in coordinates. By formula (\ref{dff}) we have:
$$\frac{\partial F}{\partial y^i}=\frac{\partial \mathscr{F}}{\partial x^p}\frac{\partial u^p}{\partial y^i}+
\frac{\partial \mathscr{F}}{\partial \dot x^p}\Big(\frac{\partial^2 u^p}{\partial y^l\partial y^i}\dot y^l+\frac{\partial^2u^p}{\partial t\partial y^i}\Big);$$
and
$$
\frac{d}{dt}\frac{\partial F}{\partial \dot y^i}=\frac{d}{dt}\Big(\frac{\partial \mathscr{F}}{\partial \dot x^p}
\frac{\partial u^p}{\partial y^i}\Big)=\frac{\partial u^p}{\partial y^i}\Big(\frac{d}{dt}\frac{\partial \mathscr{F}}{\partial \dot x^p}\Big)+\frac{\partial \mathscr{F}}{\partial \dot x^p}\Big(\frac{\partial^2 u^p}{\partial y^l\partial y^i}\dot y^l+\frac{\partial^2u^p}{\partial t\partial y^i}\Big).$$
Subtracting one equality from the other, we obtain the required result.
The theorem is proved.

\section{Lagrange equations}\label{cbbbbb}
Theorem \ref{dfggg456} means that the variational derivative behaves like a covector field with respect to the mapping $u$.
Roughly speaking, the derivation of the Lagrange equations consists of using the mapping $u$ to perform a pullback of the general equation of dynamics from the domain $D$ to the manifold $Y$.

Let us proceed to rigorous formulations.

\begin{df}Generalized forces are the components of the row vector
\begin{equation}\label{sfg40}Q(t,y,\dot y):=f\big(t,u(t,y),u_t(t,y)+u_y(t,y)\dot y\big) u_y(t,y).\end{equation}
\end{df}
\begin{theo}\label{xsfgtyuio}
Let the function $y(t)$ be a solution of the Lagrange equations (of the second kind)
\begin{equation}\label{sdfgqas}[T]=Q,\end{equation}
where the kinetic energy $T=T(t,y,\dot y)$ is defined by the formula:
\begin{equation}\label{xdfsgggtyu}
T=\mathscr T\mid_{x=u(t,y)}=
\frac{1}{2}\big(u_t+u_y\dot y\big)^TG\big(u_t+u_y\dot y\big).
\end{equation}

Then the function $x(t)=u(t,y(t))$ satisfies the general equation of dynamics (\ref{xsdfg23ee}) with constraints (\ref{xdfg11gy}).\end{theo}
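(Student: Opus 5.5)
The plan is to check two things: that $x(t)=u(t,y(t))$ satisfies the constraints (\ref{xdfg11gy}), and that the general equation of dynamics (\ref{xsdfg23ee}) holds along it. The second will follow by pulling back $[\mathscr T]-f$ through $u$ with Theorem \ref{dfggg456} and using that $u_y$ spans the space of virtual displacements.

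\emph{Constraints.} By (\ref{fg11}) we have $g(t,u(t,y))\equiv 0$ identically in $(t,y)$. Hence $g\big(t,x(t)\big)=0$ for all $t\in I$. Differentiating in $t$ gives $g_t+g_x\dot x(t)=0$, which is exactly $\ph\big(t,x(t),\dot x(t)\big)=0$ in the form (\ref{xdfg11gy}). Note also that $\dot x(t)=u_t(t,y(t))+u_y(t,y(t))\dot y(t)$. This means that evaluating any function of $(t,x,\dot x)$ along $x(t)$ is the same as evaluating the corresponding function $F$ of (\ref{dff}) along $y(t)$. In particular, the total time derivatives appearing in the two variational derivatives are taken along the same curve.

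\emph{General equation.} Apply Theorem \ref{dfggg456} with $\mathscr F=\mathscr T$. By (\ref{xdfsgggtyu}) the corresponding $F$ is exactly $T$, so along the solution
$$[\mathscr T]\big|_{x=u(t,y(t))}\,u_y(t,y(t))=[T].$$
By definition (\ref{sfg40}), $f\big(t,x(t),\dot x(t)\big)\,u_y(t,y(t))=Q\big(t,y(t),\dot y(t)\big)$. Subtracting and using (\ref{sdfgqas}) gives
$$\big([\mathscr T]-f\big)\,u_y(t,y(t))=[T]-Q=0 .$$

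It remains to take an arbitrary virtual displacement $\xi$ with $\ph_{\dot x}\xi=0$. By (\ref{xf11q}) this means $\xi\in\ker g_x\big(t,x(t)\big)$. Section 6 showed that this kernel equals $T_{x(t)}\Sigma_t$, which is the image of the isomorphism $u_y(t,y(t))$. So $\xi=u_y\eta$ for some $\eta\in\mathbb{R}^r$, and therefore $([\mathscr T]-f)\xi=\big(([\mathscr T]-f)u_y\big)\eta=0$. This is (\ref{xsdfg23ee}).

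The only delicate point is the applicability of Theorem \ref{dfggg456} along the curve. Its identity is pointwise in $(t,y,\dot y,\ddot y)$ once $d/dt$ is read as a total derivative along trajectories, so it holds along $y(t)$. The equality of the kernel with the image of $u_y$ rests on the rank count $\mathrm{rang}\,u_y=r=m-n=\dim\ker g_x$, together with $g_xu_y=0$.
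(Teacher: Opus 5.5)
Your proof is correct and is essentially the paper's argument: Theorem \ref{dfggg456} gives $([\mathscr T]-f)\,u_y=[T]-Q=0$ along $x(t)=u(t,y(t))$, and since the image of $u_y$ is the space of virtual displacements, the general equation of dynamics follows. You also spell out two points the paper leaves implicit: that $x(t)$ satisfies the constraints (\ref{xdfg11gy}) because $g(t,u(t,y))\equiv 0$, and the rank count showing $\mathrm{im}\,u_y=\ker g_x$.
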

{\it Proof of Theorem \ref{xsfgtyuio}.} By Theorem \ref{dfggg456}, we have
\begin{equation}\label{xdfg6yiii}([\mathscr{T}]-f)\Big|_{x=u(t,y(t))}u_y(t,y(t))=([T]-Q)\Big|_{y=y(t)}.\end{equation}
The right side of this equality is zero by the condition. As already noted, the image of the operator $u_y$ coincides with the space of virtual displacements, so the general equation of dynamics is satisfied.

The theorem is proved.

The manifold $Y$ is called the configuration manifold, and the local coordinates $y$ are called generalized coordinates of the system (\ref{sdfgqas}).

The manifolds $TY,\quad I\times TY$ are called the phase space and the extended phase space, respectively.

The components of the vector $\dot y$ are called generalized velocities.

\begin{theo}\label{dfb00dd}
Let the function $x(t)$ satisfy the general equation of dynamics (\ref{xsdfg23ee}) with constraints (\ref{xdfg11gy}), and at some $t_0\in I$ the equality
$g(t_0,x(t_0))=0$ holds.

Then there exists a unique solution $y(t)$ of equations (\ref{sdfgqas}) such that $$x(t)=u(t,y(t)).$$\end{theo}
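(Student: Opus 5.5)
The plan is to first show that $x(t)$ stays on $\Sigma_t$, then lift it to $Y$ through the embedding $u$, and finally transport the general equation of dynamics to $Y$ with Theorem \ref{dfggg456}.

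\emph{Step 1: $x(t)\in\Sigma_t$ for all $t$.} Since $x(t)$ satisfies the constraints (\ref{xdfg11gy}), we have $\frac{d}{dt}g(t,x(t))=g_t+g_x\dot x=0$ along $x(t)$. Hence $g(t,x(t))$ is constant on the interval $I$. It vanishes at $t_0$, so $g(t,x(t))=0$, i.e. $x(t)\in\Sigma_t$ for all $t\in I$.

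\emph{Step 2: constructing $y(t)$ and uniqueness.} Because $u(t,\cdot)$ is injective with image $\Sigma_t$, for each $t$ there is a unique point $y(t)\in Y$ with $u(t,y(t))=x(t)$; this gives uniqueness immediately. It remains to show that $y(t)$ is smooth. For this I use the local normal form from the footnote. Near a point $(t_*,x(t_*))$ there are coordinates, depending smoothly on $t$, in which $u$ reads $x^i=y^i$ ($i\le r$), $x^s=0$, and $\Sigma_t$ is $\{x^s=0\}$. In these coordinates $y(t)$ is simply the first $r$ coordinates of $x(t)$, which depend smoothly on $t$. Local smoothness on overlapping neighbourhoods, together with global uniqueness, gives a smooth curve $y(t)$ on all of $I$. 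Equivalently, one can apply the implicit function theorem to $u(t,y)=x(t)$ using $\mathrm{rang}\,u_y=r$ and the fact that $x(t)$ lies in the image. I expect this regularity and globalization to be the only technically delicate step, since it must handle the time-dependent charts and the passage from local to global on $I$.

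\emph{Step 3: $y(t)$ solves (\ref{sdfgqas}).} Differentiating $x(t)=u(t,y(t))$ gives $\dot x=u_t+u_y\dot y$, and the second derivatives match as well. So the substitution $x=u(t,y)$ in Theorem \ref{dfggg456}, applied to $\mathscr F=\mathscr T$, computes $[\mathscr T]$ along $x(t)$, and identity (\ref{xdfg6yiii}) holds along $y(t)$. Together with definition (\ref{sfg40}) of $Q$ this yields
$$([T]-Q)\big|_{y=y(t)}=([\mathscr T]-f)\big|_{x=x(t)}\,u_y(t,y(t)).$$
Each column of $u_y(t,y(t))$ lies in $T_{x(t)}\Sigma_t=\ker g_x=\ker\ph_{\dot x}$, so it is a virtual displacement. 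By the general equation of dynamics (\ref{xsdfg23ee}) the right-hand side is therefore zero, which proves $[T]=Q$ along $y(t)$.
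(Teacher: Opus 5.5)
Your proof is correct and follows the paper's argument step for step: integrate $\frac{d}{dt}g(t,x(t))=0$ to get $x(t)\in\Sigma_t$, lift $x(t)$ to $y(t)$ through the injective embedding $u(t,\cdot)$ (the paper gets smoothness by selecting $r$ independent equations and applying the implicit function theorem), and conclude from (\ref{xdfg6yiii}) because the columns of $u_y$ are virtual displacements. Your smoothness argument via the footnote's normal-form charts is slightly more careful than the paper's, since the implicit-function-theorem route tacitly needs continuity of $y(t)$ to identify the local solution with the pointwise preimage, and the chart condition $U_x\cap\Sigma_t=\{x^s=0\}$ supplies exactly that continuity.
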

{\it Proof of Theorem \ref{dfb00dd}}.
We integrate the equality $$\frac{d}{dt}g(t,x(t))=0$$ from $t_0$ to $t:\quad g(t,x(t))-g(t_0,x(t_0))=0.$
Thus, $x(t)\in \Sigma_t,\quad t\in I$.

For any $t$, the equation \begin{equation}\label{dh0009}x(t)=u(t,y)\end{equation} is uniquely solvable for $y$, since $u(t,\cdot):Y\to\Sigma_t$ is a diffeomorphism. Let us denote this solution as $y(t),\quad x(t)=u(t,y(t))$.

We will show that $y(t)$ is a smooth function. Indeed, (\ref{dh0009}) is a system of $m$ equations with an $r$-dimensional unknown vector $y$. We select $r$ independent equations from this system and apply the implicit function theorem regarding the smoothness of the solution with respect to the parameter.

The left side of equality (\ref{xdfg6yiii}) is zero by the condition.
The theorem is proved.

\begin{theo}\label{asdf4eew}The formula
$T=T_2+T_1+T_0$ holds, where $$T_2=\frac{1}{2}\dot y^Tu_y^TGu_y\dot y$$ is a positive definite quadratic form of the generalized velocities; $T_1=u_t^TGu_y\dot y$ is a linear form, $T_0=u_t^TGu_t/2$.\end{theo}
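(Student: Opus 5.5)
The plan is to expand the defining formula (\ref{xdfsgggtyu}) for $T$ directly and then check positive definiteness of the quadratic part using the rank of $u_y$. The decomposition itself is pure algebra. Write $v=u_t+u_y\dot y$. Since $G$ is symmetric, bilinearity gives
$$\tfrac12 v^TGv=\tfrac12\dot y^Tu_y^TGu_y\dot y+\tfrac12\big(u_t^TGu_y\dot y+\dot y^Tu_y^TGu_t\big)+\tfrac12 u_t^TGu_t .$$
The two middle terms are scalars, and each is the transpose of the other. By symmetry of $G$ they are therefore equal, so their half-sum is $u_t^TGu_y\dot y=T_1$. This term is linear in $\dot y$, with coefficients depending on $(t,y)$. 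The remaining two terms are $T_2$, which is quadratic in $\dot y$, and $T_0$, which does not depend on $\dot y$.

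The only substantive point is positive definiteness of $T_2$, that is, of the $r\times r$ matrix $A(t,y)=u_y^TGu_y$. Symmetry of $A$ is immediate. For definiteness I would take $\dot y\ne0$ and note that
$$\dot y^TA\dot y=(u_y\dot y)^TG(u_y\dot y).$$
Here $G=\mathrm{diag}(m_1,m_1,m_1,\ldots)$ is positive definite, because all masses are positive. Also $u_y\dot y\ne 0$, because $\mathrm{rang}\,u_y=r$, which holds since $u(t,\cdot)$ is an embedding and $u_y$ is an isomorphism onto $T_{u(t,y)}\Sigma_t$. Hence the expression is strictly positive.

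Alternatively, one can invoke Lemma \ref{xfg6zzzz6} with $P=G$ and $B=u_y^T$. This $B$ is an $r\times m$ matrix of full rank $r<m$, so $BPB^T=u_y^TGu_y$ is positive definite. The only step requiring care is making sure the rank condition on $u_y$ is taken from the embedding hypothesis. No real obstacle is expected.
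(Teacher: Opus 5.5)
Your proposal is correct and matches the paper, which says only that the statement follows from expanding formula (\ref{xdfsgggtyu}) and applying Lemma \ref{xfg6zzzz6} with $P=G$ and $B=u_y^T$, using $\mathrm{rang}\,u_y=r$. Your direct argument (injectivity of $u_y$ together with positive definiteness of $G$) simply proves that lemma in this case.
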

This statement follows directly from formula (\ref{xdfsgggtyu}) and Lemma \ref{xfg6zzzz6}.

From Theorem (\ref{asdf4eew}) it follows that system (\ref{sdfgqas}) can be represented in normal form:
$$\ddot y=a(t,y,\dot y),$$ and therefore the Cauchy existence and uniqueness theorem is valid for it.

If the forces $f$ are generalized potential forces, then the forces $Q$ are also generalized potential forces -- due to the covariance of the operation $[\cdot]$. Moreover
$Q=[V],\quad V(t,y,\dot y)=\mathscr V\mid_{x=u(t,y)},$ and equations (\ref{sdfgqas}) turn into Lagrange equations:
$$[L]=0,\quad L=T-V.$$The function $L=L(t,y,\dot y)$ is called the Lagrange function or Lagrangian.

\section{On classical notations}
Traditionally, in mechanics, the vector of virtual displacements $\xi$ is written as follows:
$$\xi=(\delta \boldsymbol r_1,\ldots,\delta\boldsymbol  r_\nu)^T.$$
Accordingly, equations (\ref{sdg-0}) and (\ref{sfg43}) have the form
$$\sum_{i=1}^\nu\Big(\frac{\partial \Phi^k}{\partial\boldsymbol{\dot r}_i},\delta \boldsymbol r_i\Big)=0,\quad\sum_{i=1}^\nu(\boldsymbol R_i,\delta \boldsymbol r_i)=0,\quad k=1,\ldots,n.$$

The general equation of dynamics (\ref{xsdfg23ee}) has the form
$$\sum_{i=1}^\nu(m_i\boldsymbol{\ddot r}_i-\boldsymbol F_i,\delta\boldsymbol r_i)=0.$$

The embedding $u$ is introduced as follows:
$$u(t,\cdot):y\mapsto (\boldsymbol r_1(t,y),\ldots,\boldsymbol r_\nu(t,y)).$$
Accordingly, the generalized forces (\ref{sfg40})
are calculated by the formula:
$$Q_j=\sum_{i=1}^\nu\Big(\bs F_i,\frac {\partial\bs r_i}{\partial  y^j}\Big),\quad Q=(Q_1,\ldots,Q_r).$$

\section{Appendix}

Consider smooth functions (column vectors)
$$a, b: F \to \mathbb{R}^n, \quad n < m$$
defined on the domain $F \subset \mathbb{R}^m = \{y = (y^1, \ldots, y^m)^T\}$.

Suppose that the following condition holds at all points of the domain $F$:
$$\mathrm{rank}\, a_y = \mathrm{rank}\, b_y = n.$$
Furthermore, assume that the smooth manifolds
$\{y \in F \mid a(y) = 0\}$ and $\{y \in F \mid b(y) = 0\}$ coincide. Let us denote this $(m-n)$-dimensional manifold by $M,\quad M\subset\mathbb{R}^m$.

\begin{theo}\label{sdfgb09}Fix an arbitrary point $y_0 \in M$. Then there exists an open neighborhood $W_{y_0}\subset F$ of this point, an open neighborhood $Z$ of the origin in the space
$\mathbb{R}^n=\{z=(z^1,\ldots,z^n)\}$, and a smooth function
$$U: W_{y_0} \times Z \to \mathbb{R}^n, \quad U = U(y, z),$$
such that:
\begin{enumerate}[\rm 1)]
    \item $U(y, z) = 0 \iff z = 0$;
    \item $\det U_z(y, 0) \neq 0$;
    \item $b(y) = U(y, a(y))$.
\end{enumerate}
\end{theo}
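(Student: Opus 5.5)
Near $y_0$ we straighten $M$ by changing coordinates so that $a$ itself becomes $n$ of the coordinates. Since $\mathrm{rank}\,a_y(y_0)=n$, we can choose $m-n$ linear functions $w=(w^1,\ldots,w^{m-n})$ of $y$ so that the map
$$\Theta:y\mapsto (a(y),w(y))$$
has nonsingular Jacobian at $y_0$. By the inverse function theorem, $\Theta$ is a diffeomorphism of a neighborhood $W_{y_0}$ onto a product $Z_1\times V$. Here $Z_1$ is a ball around $0\in\mathbb{R}^n$ and $V$ is a neighborhood of $w(y_0)$, and we shrink both so that the product structure holds. In these coordinates $M\cap W_{y_0}=\{z=0\}$, where $z=a(y)$. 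Define
$$\beta(z,w):=b(\Theta^{-1}(z,w)).$$
Then $\beta$ is smooth and, because the two zero sets coincide, $\beta(z,w)=0$ exactly when $z=0$ (within $Z_1\times V$). In particular $\beta(0,w)\equiv 0$.

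Next we factor $\beta$ through $z$ by Hadamard's lemma:
$$\beta(z,w)=\int_0^1\frac{d}{ds}\beta(sz,w)\,ds=\Big(\int_0^1\beta_z(sz,w)\,ds\Big)z=:H(z,w)\,z,$$
with $H$ a smooth $n\times n$ matrix function and $H(0,w)=\beta_z(0,w)$. We then set
$$U(y,z):=H\big(z,w(y)\big)\,z,\qquad y\in W_{y_0},\ z\in Z.$$
Property 3) holds by construction, since $U(y,a(y))=\beta(a(y),w(y))=b(y)$. For property 2), $U_z(y,0)=H(0,w(y))=\beta_z(0,w(y))$. Differentiating $b=\beta\circ\Theta$ at a point of $M$ gives $b_y=\beta_z a_y+\beta_w w_y$, and $\beta_w=0$ there because $\beta(0,\cdot)\equiv 0$. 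Hence $b_y=\beta_z a_y$ on $M$. Since $\mathrm{rank}\,b_y=n$, the matrix $\beta_z(0,w)$ is nonsingular. So $\det U_z(y,0)\neq 0$ for $y\in W_{y_0}$ close to $M$, and after shrinking $W_{y_0}$, for all of it.

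Property 1) is the delicate point. Note that $z$ is an independent variable here and need not equal $a(y)$. We need $H(z,w(y))z=0\Rightarrow z=0$, which holds exactly when $H(z,w(y))$ is invertible for all $z\in Z$ and $y\in W_{y_0}$. The plan is to get this by continuity and compactness. $H(0,w(y_0))$ is invertible, so $\det H\neq0$ on some neighborhood $Z\times V'$ of $(0,w(y_0))$. We then shrink $W_{y_0}$ so that $w(W_{y_0})\subset V'$, and take $Z$ to be a small ball. With these choices, $U(y,z)=0$ forces $z=0$, and conversely $U(y,0)=0$ trivially.

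The remaining issue is consistency with property 3): $a(y)$ must lie in $Z$ for $y\in W_{y_0}$. This is secured by taking $W_{y_0}=\Theta^{-1}(Z\times V'')$ with $V''\subset V'$. That choice is the step requiring the most care in ordering the shrinkings, but it causes no real obstruction.
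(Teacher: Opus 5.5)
Your proof is correct and is essentially the paper's construction. With $w=v$ one has $\Theta^{-1}(z,v)=(f(z,v),v)$, and since $H(z,w)z\equiv\beta(z,w)$, your $U(y,z)=\beta(z,w(y))$ is exactly the paper's $U(y,z)=b(f(z,v),v)$. The Hadamard factorization and the shrinking needed to make $H$ invertible are therefore unnecessary: property 1) follows at once from $\beta(z,w)=0\iff z=0$, which you had already observed. Your explicit bookkeeping ensuring $a(W_{y_0})\subset Z$ and that $(z,w(y))$ stays in the domain of $\beta$ is a point the paper glosses over.
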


\begin{proof}[Proof of Theorem \ref{sdfgb09}]
Let us introduce the notation $u = (y^1, \ldots, y^n)^T$ and $v = (y^{n+1}, \ldots, y^m)^T$.

Without loss of generality, we may assume that the inequality
\begin{equation}\label{sfh098}
\det a_u(u, v) \neq 0
\end{equation}
holds in a neighborhood of the point $y_0$.
By the implicit function theorem, we have:
\begin{equation}\label{sdf0}
z = a(u, v) \iff u = f(z, v), \quad z = a(f(z, v), v), \quad u = f(a(u, v), v).
\end{equation}
Note that $M \cap W_{y_0} = \{y = (u,v) \in W_{y_0} \mid z = 0\}$.

Define $U(y, z) := b(f(z, v), v)$.

Property 3) follows directly from \eqref{sdf0}.

We now verify property 1). Suppose $U(y, z) = 0$. Then $(f(z, v), v) \in M$, which, by \eqref{sdf0}, implies $z = 0$. The reverse implication follows similarly.

Next, we verify property 2). Observe that for every point $y \in M$, the kernels of the differentials of the mappings $a$ and $b$ coincide with the tangent space to the manifold $M$ at the given point:
$$\ker a_y(y) = \ker b_y(y) = T_y M.$$
By Lemma \ref{dfh6ll} there exists a linear operator $C(y): \mathbb{R}^n \to \mathbb{R}^n$ such that
$$b_y(y) = C(y) a_y(y).$$
Since the operators $a_y, b_y: \mathbb{R}^m \to \mathbb{R}^n$ are surjective, we have $\det C(y) \neq 0$.

We express the corresponding Jacobian matrices in block form:
$$a_y = (a_u, a_v), \quad b_y = (b_u, b_v).$$
Hence, $b_u = C a_u$. Thus, taking relation \eqref{sfh098} into account, we obtain
$$\det b_u(u, v) \neq 0 \quad \text{for all } y = (u, v) \in M \cap W_{y_0}.$$

Using the definition of the function $U$, we compute the derivative with respect to $z$:
$$U_z(y, 0) = b_u(f(0, v), v) \, f_z(0, v),$$
where $(f(0, v), v) \in M$.

It follows from relations \eqref{sdf0} that
$$\mathrm{id}_{\mathbb{R}^n} = a_u(f(0, v), v) \, f_z(0, v).$$
Consequently, $\det f_z(0, v) \neq 0$, which yields $\det U_z(y, 0) \neq 0$.

This proves property 2).

The proof of the theorem is now complete.
\end{proof}

\end{document}